\newcommand{\name}[0]{MRPC}
\newtheorem{prop}{Proposition}
\newtheorem{thm}{Theorem}
\newtheorem{lemma}{Lemma}
\title{\LARGE
Thinking Fast and Slow: Optimization Decomposition Across Timescales
}
\author{Gautam Goel \and Niangjun Chen \and Adam Wierman
\thanks{Gautam Goel, Niangjun Chen, and Adam Wierman are with the Department of Computing and Mathematical Sciences, California Institute of Technology. Email:{\tt\small \{ggoel, ncchen, adamw\}@caltech.edu}
This work was supported by the NSF through grants AitF-1637598, CNS-1518941, CPS-154471, CNS-1319820, EPAS-1307794. We also thank Desmond Cai and Nik Matni for helpful discussions.}
}%
\begin{document}

\maketitle
\thispagestyle{empty}
\pagestyle{empty}

\begin{abstract}
Many real-world control systems, such as the smart grid and human sensorimotor control systems, have decentralized components that react quickly using local information and centralized components that react slowly using a more global view. This paper seeks to provide a theoretical framework for how to design controllers that are decomposed across timescales in this way.  The framework is analogous to how the network utility maximization framework uses optimization decomposition to distribute a global control problem across independent controllers, each of which solves a local  problem; except our goal is to decompose a global problem temporally, extracting a timescale separation.  Our results highlight that decomposition of a multi-timescale controller into a fast timescale, reactive controller and a slow timescale, predictive controller can be near-optimal in a strong sense. In particular, we exhibit such a design, named Multi-timescale Reflexive Predictive Control (\name), which maintains a per-timestep cost within a constant factor of the offline optimal in an adversarial setting. 
\end{abstract}

\section{INTRODUCTION}

Modern control systems nearly always operate at multiple timescales.  In the power grid, slow timescale economic dispatch is used to determine which baseload generators will supply power, while fast timescale frequency regulation is used to correct any imbalance between demand and supply that may arise \cite{cai2015distributed}.  In networking, software defined networks use a slow timescale ``control plane'' controller to decide where to send data packets, whereas fast timescale ``data plane'' controllers are responsible for routing the actual data \cite{kreutz2015software}.  Even human sensorimotor control exhibits the same phenomenon, with slow timescale behaviors such as trajectory planning and fast timescale behaviors such as involuntary reflexes \cite{yamashita2008emergence,smith2006interacting,fusi2007neural, milton2011delayed}.  In fact, such timescale separation has consequently been proposed for the control of robotic systems \cite{yamashita2008emergence,espenschied1996biologically}.

Thus, the design and analysis of multi-timescale control systems has received considerable attention.  However, the design of control policies for multi-timescale control systems typically does not address the joint problem of designing control policies across timescales.  Instead, controllers for each timescale are designed independently.  For example, in the power grid, the slow timescale problem of economic dispatch is usually studied separately from the fast timescale problem of frequency regulation.  Similarly, in software defined networking, the design of the control plane and data plane controllers are usually considered separately.

Across these and other applications timescale separation is \textit{assumed} rather than derived, and the resulting subproblems are then studied independently, without guarantees about how they operate jointly. As a result, there are significant inefficiencies that are inherent to the resulting designs, even if each timescale problem is solved optimally.  For example, recent work jointly designing economic dispatch and frequency regulation in the the power grid highlights significant inefficiency in designs that treated the two timescales independently  \cite{cai2015distributed}.  

In this paper, our goal is to develop a framework for \emph{deriving} rather than \textit{assuming} a timescale separation in global optimization problems.  In particular, we adapt the idea of optimization decomposition from the domain of distributed control into the domain of multi-timescale control.  

There is a vast literature on optimization decomposition, in fields as diverse as Internet congestion control \cite{srikant2012mathematics,low2002internet}, smart grid control \cite{deeb1990linear,cai2015distributed}, robotics \cite{brooks1986robust, espenschied1996biologically} and beyond  \cite{chiang2007layering}. The idea of this approach is to decompose a global optimization problem into smaller localized subproblems, each of which is solved by independent controllers. See \cite{chiang2007layering} for a survey.  In a similar way, our goal in this paper is to look for decompositions of a global optimization problem in \emph{time}, as opposed to in \emph{space}.

However, this goal is made challenging by the tight coupling between the timescales due to the underlying dynamics of the system under consideration.  Typically, spatial optimization decomposition is performed for static optimizations, but in multi-timescale control the dynamics of the system cannot be ignored.  Any slow timescale action will impact the future state via the dynamics and hence must be taken into account when designing the fast controller; conversely, any fast timescale action impacts the state seen by a slow controller and thus impacts its design as well. This makes it unclear whether it is possible to achieve a clean separation between controllers at different timescales. 

\subsection{Contributions of this paper}  We make three main contributions in this paper.  

Firstly, we introduce a simple but general model for studying multi-timescale optimal control. We consider a system subject to linear dynamics which is perturbed by noise; we make absolutely no assumptions about the nature of the noise, i.e., it may be random or even adversarial. This system can be controlled by two controllers, one of which is a traditional, ``fast timescale" controller that can react immediately to the noise, and another which is a novel, ``slow timescale'' controller that is only able to react slowly, but which is empowered with access to more information than the fast controller and is potentially cheaper to use.

Secondly, we prove that one cannot expect to be able to design near-optimal controllers for multi-timescale control problems without the use of predictions. Our proof technique is based on a blackbox reduction to online convex optimization, a problem that has been intensively studied within the online algorithms community over the past decade. We use this reduction to describe a novel algorithm for the classic, fast timescale problem, a result which is of interest in its own right.

Thirdly, we introduce a new multi-timescale control policy, \name, and derive strong guarantees on its performance. In particular, we prove that the per-step cost incurred by our algorithm is at most a constant more than that incurred by the offline optimal.
The design of our policy is motivated by a structural result about the offline optimal control action, which highlights a strong decomposition between fast and slow timescale controllers. Applying this idea to the design of the online algorithm, we are able to achieve a clean separation between timescales. Remarkably, our decomposition results in a purely reflexive, ``dumb" fast controller, which performs no optimization or lookahead.  Thus, all of the 
computational burden is shifted onto the slow, ``smart'' controller. This property of \name\ is desirable in many applications since the slow controller is often centralized and able to take a global view of the system, but the fast controllers are decentralized and myopic, e.g., the power systems, networking, and robotics examples mentioned above. 

\subsection{Related literature}

This paper broadly falls into the category of optimal control \cite{zhou1996robust, bertsekas1995dynamic}. Typical methods for solving optimal control problem involve Pontryagin's principle \cite{rishel1965extended, raymond1999pontryagin} and Hamilton-Jacobi-Bellman equation \cite{bertsekas1995dynamic, borrelli2005dynamic}. With the rare exception of Linear Quadratic (LQ) systems, optimal control problems are generally nonlinear and do not admit analytical solutions. It is therefore necessary to solve optimal control problem via numerical methods. However, most existing numerical methods for optimal control (see \cite{rao2009survey} for a survey) do not scale well, and decomposing large scale problems into smaller subproblems is often required.   
There is large of literature on decomposition in the field of convex optimization and distributed computing.  Common approaches include primal-dual decomposition \cite{low1999, palomar2007alternative}, alternating direction method of multipliers \cite{boyd_admm, combettes2005signal} etc. These approaches have been crucial in developing distributed algorithms in various applications, e.g.,  communication networks \cite{srikant2012mathematics,low2002internet,chiang2007layering}, power systems \cite{erseghe2014distributed, peng2016distributed}, robotics \cite{suzuki1999distributed, raffard2004distributed}. However, these approaches are focused on spatial decomposition, and our focus in this paper is on temporal decomposition into independent controllers at different timescales.  

The most related prior work is \cite{matni2016}, which proposes an architectural decomposition of the optimal control problem into two layers: a top level trajectory planning problem that generates reference signals and  a low level  tracking problem that simply follows the reference points. However \cite{matni2016} does not provide optimality guarantees for the decomposition. Another related recent paper is  \cite{cai2015distributed}, which focuses on temporal decomposition in the context of power systems.  The work provides an optimality condition  for time-scale decomposition of optimal control in power systems.  But, note that \cite{cai2015distributed} considers a problem without dynamics. In this paper, we propose timescale decomposition for a general optimal control problem with linear dynamics and we provide provable performance guarantees. 

\section{MODEL}
\label{s.model}

Our goal in this paper is to study the design of controllers for systems that operate at multiple timescales.  To this end, we focus on a simple but general optimal control problem.  

The multi-timescale problem we consider builds on the following optimal control problem, which operates at a single timescale:
\begin{align}
\min_{x, f}  \quad & \sum_{t = 1 }^{T} c_x(x_t) + c_f(f_t) \label{e.fastonly}\\ 
\text{s.t.} \quad & x_t = Ax_{t-1} + B^f f_t + w_t \nonumber \\ 
   & x_0 = 0 \nonumber 
\end{align}
Here $x_t \in \mathbb{R}^n$ is the state variable, $f_t \in \mathbb{R}^n$ is the control action and $w_t \in \mathbb{R}^n$ is the disturbance. In our technical results, we assume that the control matrix $B^f$ is invertible; considering the non-invertible case is an interesting direction for future work. The cost functions $c_x(\cdot), c_f(\cdot)$ are usually assumed to be non-negative and convex. The special case when each noise increment $w_t$ is an i.i.d.\ Gaussian random variable and $c_x(\cdot), c_f(\cdot)$ are positive definite quadratic forms represents the Linear Quadratic Regulator (LQR) framework \cite{chow1975analysis,kwakernaak1972linear, sontag2013mathematical}.

To extend \eqref{e.fastonly} to a multi-timescale control problems, we introduce a ``slow'' controller.  The slow controller reacts much less quickly to noise than the fast controller; however there are two potential benefits afforded by the existence of the slow controller. 

First, in many situations the slow controller is centralized, and hence can use global information to make make better decisions than the decentralized, localized fast controllers. In our context, we model this by allowing the slow controller access to predictions of future noise increments. An example where the slow controller has this benefit is software defined networking, where the centralized controller has access to much more information than the local distributed controllers  that provide congestion control via simple reactive policies \cite{kreutz2015software}.  Similarly, this type of interaction between a ``smart'' slow controller and a ``reflexive'' fast controller is common in robotics \cite{espenschied1996biologically}.

Second, in many cases the slow controller is much cheaper to operate than the fast controller.  Thus, making use of the slow controller is crucial for minimizing cost.  For example, in the smart grid cheap ``baseload'' generators are used to supply the bulk of demand, whereas fast and relatively expensive ``peaker" generators are used to quickly correct any imbalances between supply and demand that may arise \cite{senjyu2003fast, peters2007basics, masters2013renewable}, \cite{cai2015distributed}. A similar distinction happens between economic dispatch and frequency regulation.  In fact, a motivation for this paper comes from recent work in \cite{cai2015distributed} that highlights a timescale separation between these controllers. 

Adding a slow controller to the optimal control problem in \eqref{e.fastonly} gives:
\begin{align}
   \min_{x, f, s} \quad & \sum_{t = 1}^{T} c_x(x_t) + c_f(f_t) +  c_s(s_t) \label{e.globalopt}\\ 
\text{s.t.} \quad &  x_t = Ax_{t-1} + B^f f_t + B^s s_t + w_t \nonumber \\ 
   &x_0 = 0 \nonumber \\ 
   &s_t = s_{t-1} \hspace{5mm} \forall t \not \in 0, k, 2k, \ldots \nonumber
\end{align}
Here $s_t$ denotes the control action of a slow controller. The constraint on $s_t$ means that the slow controller cannot react quickly, i.e., it can only change its action every $k$ timesteps. 

This formulation leads to a intrinsic notion of timescales: there is a \textit{fast timescale} consisting of the timesteps $\{ 1, 2, \ldots \}$ in which the fast controller reacts, and a \textit{slow timescale} consisting of the timesteps $\{1, k + 1, 2k + 1, \ldots \}$ at which the slow controller reacts. Clearly one could continue to add other timescales to this formulation as well, but we focus on the two timescale case for clarity.  

The focus of this paper is the design of a set of fast timescale and slow timescale controllers that operate independently but, together, approximate the optimal value of \eqref{e.globalopt} without fully knowing the $w_t$'s in advance. Motivated by the applications mentioned above, our goal is to develop designs where the slow controller is sophisticated and predictive, but the fast controller is simple and reactive.  

One of the key differences of our approach compared to classical control theory lies in how we measure performance. Typical results in the control theory literature focus on settings with distributional assumptions about the noise vector $w$ (for example, i.i.d.\ Gaussian), and seek to minimize the expected cost with respect to this distribution. In contrast, we use the approach of the online algorithms community and analyze the worst-case performance without distributional assumptions via the \textit{competitive ratio} \cite{karp1992line, fiat1998online}.

Formally, the competitive ratio is defined as follows.  Let $OPT$ denote the optimal value of \eqref{e.globalopt} and $ALG$ the cost incurred by a specific algorithm.  Then, the \textit{competitive ratio} is defined as $$CR(ALG) = \sup_w \frac{ALG}{OPT}.$$ This quantity measures the worst-case performance of an algorithm relative to the offline optimal and, in particular, makes no distributional assumptions on $w$. An algorithm is said to be  \textit{constant competitive} if its competitive ratio is bounded by a finite constant, independent of $T$. 

We show in Section \ref{s.fastonly} that it is impossible to design constant competitive algorithms for \eqref{e.globalopt} without using predictions of the future $w_t$. For this reason, our results focus on settings where algorithms have access to a limited number of noisy predictions of future $w_t$.  In particular, we assume that, at the start of each slow timescale interval, we have estimates $\hat{w}_t$ of the true noise increments over that slow timescale interval. Importantly, we do not make distributional assumptions about the predictions or prediction errors.  

Finally, one note on notation: throughout this paper, we follow the standard convention that vector valued variables are lowercase and matrix valued values are uppercase. When we write $\| A \|_a$, we mean the matrix norm of $A$ induced by the vector norm $\| \cdot \|_a$. Also, we follow the convention of the algorithms community and often abuse notation to let an algorithm's name denote the cost it incurs.

\section{Hardness of Multi-Timescale Control } \label{s.fastonly}



Before turning to the design and analysis of an online algorithm for multi-timescale control, it is natural to ask what performance we should expect to be able to attain.  In particular, should we expect to be able to find a constant competitive algorithm?

We show in this section that the answer is ``no'' in general, but that it becomes ``yes" when the algorithm has access to a limited number of noisy predictions.  This observation is crucial to the design and analysis of the algorithm we present in Section \ref{s.multi}. 

Interestingly, we cannot even expect to be able to design a constant competitive algorithm for the fast control subproblem of \eqref{e.globalopt} given by \eqref{e.fastonly}.  To show this, we prove below that \eqref{e.fastonly} can be reformulated as an online convex optimization problem, which is a classical online algorithms problem that has received considerable attention in the last decade \cite{lin2012,lin2013,antoniadis2016chasing,chen2015,chen2016}.  Importantly, competitive algorithms for online convex optimization algorithms do not exist in general, unless the algorithms are given access to noisy predictions about the future. 

Formally, the equivalence to online convex optimization is stated as follows.


\begin{prop} \label{prop.oco}
Suppose both $c_s(\cdot)$ and $c_f(\cdot)$ are a norm, $\| \cdot \|$, and suppose $B^f = B^s$.  Then \eqref{e.fastonly} is equivalent to \eqref{e.globalopt} and, further, \eqref{e.fastonly} can be reformulated as 
\begin{align}
  \min_{y}\sum_{t = 1}^{T} c_t(y_t) + \|(B^f)^{-1}(y_t - Ay_{t-1})\|, \label{e.ocoformulation}
\end{align}
where $y_0 = 0$ and $c_t(y_t) = c_x(y_t + v_t)$ for some $v_t$.
\end{prop}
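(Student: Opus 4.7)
The plan is to handle the two claims of the proposition in turn: first the equivalence of \eqref{e.fastonly} and \eqref{e.globalopt}, then the reformulation of \eqref{e.fastonly} as the online convex optimization \eqref{e.ocoformulation}. Both arguments are purely algebraic; the assumptions $B^f=B^s$ and $c_f=c_s=\|\cdot\|$ do the real work in the first part, while invertibility of $B^f$ drives the second.

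For the equivalence, I would argue by showing the two optimal values sandwich each other. The direction $\mathrm{OPT}_{\ref{e.globalopt}}\le \mathrm{OPT}_{\ref{e.fastonly}}$ is immediate: given any feasible $(x,f)$ for \eqref{e.fastonly}, the triple $(x,f,s\equiv 0)$ is feasible for \eqref{e.globalopt} (the slow-controller constraint $s_t=s_{t-1}$ is trivially satisfied) and attains the same cost, since $c_s(0)=0$. For the reverse direction, take any feasible triple $(x,f,s)$ for \eqref{e.globalopt} and define $\tilde f_t:=f_t+s_t$. Because $B^f=B^s$, the dynamics $x_t=Ax_{t-1}+B^f\tilde f_t+w_t$ still hold, so $(x,\tilde f)$ is feasible for \eqref{e.fastonly}. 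Using that $c_f=c_s=\|\cdot\|$ together with the triangle inequality,
\begin{equation*}
c_f(\tilde f_t)=\|f_t+s_t\|\le \|f_t\|+\|s_t\|=c_f(f_t)+c_s(s_t),
\end{equation*}
so the cost does not increase. Summing over $t$ gives $\mathrm{OPT}_{\ref{e.fastonly}}\le \mathrm{OPT}_{\ref{e.globalopt}}$.

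For the reformulation, the key idea is a noise-absorbing change of variables. Define $v_t$ by the noise-only recursion $v_t=Av_{t-1}+w_t$ with $v_0=0$, so that $v_t=\sum_{s=1}^{t}A^{t-s}w_s$ is entirely determined by the disturbance history. Set $y_t:=x_t-v_t$; then $y_0=0$ and
\begin{equation*}
y_t-Ay_{t-1}=(x_t-Ax_{t-1})-(v_t-Av_{t-1})=B^f f_t.
\end{equation*}
Since $B^f$ is invertible, this is a bijection between $(f_t)$ sequences and $(y_t)$ sequences (given the fixed $w_t$), with $f_t=(B^f)^{-1}(y_t-Ay_{t-1})$, hence $c_f(f_t)=\|(B^f)^{-1}(y_t-Ay_{t-1})\|$. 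Defining $c_t(y):=c_x(y+v_t)$ converts the state cost $c_x(x_t)=c_x(y_t+v_t)=c_t(y_t)$ into the desired time-varying convex function, and summing yields exactly \eqref{e.ocoformulation}.

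I do not expect a genuine obstacle here: both claims reduce to one-line identities once the right substitution and the triangle inequality are invoked. The only subtle point to flag is that $c_t$ depends on $v_t$, which is a deterministic function of $w_{1:t}$; this ties the OCO cost sequence to the disturbance trajectory, which is important for interpreting the proposition (and for the subsequent hardness argument that uses it to transfer OCO lower bounds to \eqref{e.fastonly}). The invertibility of $B^f$, assumed earlier in the paper, is essential to express $f_t$ in terms of $(y_t,y_{t-1})$ and thereby eliminate $f$ from the optimization.
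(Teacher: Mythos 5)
Your proposal is correct and follows essentially the same route as the paper: the equivalence is shown by mapping $(x,f)\mapsto(x,f,0)$ in one direction and $(x,f,s)\mapsto(x,f+s)$ with the triangle inequality in the other, and the reformulation uses the identical change of variables $y_t=x_t-v_t$ with $v_t=\sum_{i=1}^t A^{t-i}w_i$ (you define $v_t$ by its recursion, the paper by the explicit sum, which is the same thing). Your write-up is in fact slightly more careful than the paper's, which compresses the triangle-inequality step into the phrase ``by the same reasoning.''
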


\begin{proof}
First, we need to argue that the multi-timescale problem \eqref{e.globalopt} is equivalent to the fast timescale problem in \eqref{e.fastonly} under the assumptions of the proposition.  Suppose $f^*$ is an optimal control action for  $\eqref{e.fastonly}$. Then the pair $(f^*, 0)$ is an optimal pair of fast and slow control actions for \eqref{e.globalopt}, since it is feasible and achieves the same cost. Conversely, suppose $(f^*, s^*)$ is an optimal pair of control actions for  $\eqref{e.globalopt}$. Then the action $f^* + s^*$ is an optimal action for \eqref{e.fastonly}, by the same reasoning.


The second part of the proposition is to prove the reformulation of \eqref{e.fastonly} as \eqref{e.ocoformulation}.  To do this, we can iterate the dynamics and apply a change of variables. Specifically, iterating the dynamics in \eqref{e.fastonly} backwards in time, we see that $$x_t = \sum_{i = 1}^{t} A^{t-i}B^f f_i + \sum_{i = 1}^{t} A^{t-i}w_i.$$
Next, introduce the change of variables $$y_t = \sum_{i = 1}^t A^{t-i}B^f f_i, v_t = \sum_{i = 1}^t A^{t-i}w_i.$$ This yields \eqref{e.ocoformulation}. Notice that, given the solution to \eqref{e.ocoformulation}, we can construct the corresponding solution to \eqref{e.fastonly} by setting $f_t = (B^f)^{-1}(y_t - Ay_{t-1}).$
\end{proof}

Problem \eqref{e.ocoformulation} is a specific kind of online convex optimization problem known as a ``Smoothed'' Online Convex Optimization (SOCO). Specifically, convex cost functions $c_t(y_t) = c_x(y_t + v_t) $ arrive online and the goal of the online algorithm is to minimize the cost paid by choosing the sequence of actions $\{y_t\}$. The term $\|(B^f)^{-1}(y_t - Ay_{t-1})\|_f$ acts as a regularizer, penalizing choices that differ from the previous choice $y_{t-1}$ under the dynamics of $A$. Proposition \ref{prop.oco} provides a reduction from SOCO to \eqref{e.globalopt}: if we had a competitive online algorithm for \eqref{e.globalopt}, we would have one for SOCO as well.

SOCO problems have been intensely studied in the past decade due to their widespread applications in fields as diverse as motion tracking \cite{ledata}, power management for large data centers \cite{lin2013}, geographical load-balancing for Internet scale applications \cite{lin2012} \cite{qureshi2009cutting}, and video streaming \cite{niu2011}, \cite{joseph2012jointly}.  In general, while there exist constant competitive algorithms for one dimensional \cite{bansal2015}, \cite{lin2012} and two dimensional \cite{antoniadis2016chasing} SOCO problems, it is unknown whether there exist constant competitive algorithms for higher dimensions.  Further, it has been shown that SOCO problems are  equivalent to Convex Body Chasing \cite{friedman1993convex} in the sense that a competitive algorithm for one implies the existence of a competitive algorithm for the other \cite{antoniadis2016chasing}.  This highlights the difficulty of obtaining constant competitive algorithms since Convex Body Chasing has been open for several decades. 

Due to the difficulty of SOCO-style problems,  much of the work on these problems has focused on settings where the online algorithms have access to (possibly noisy) predictions about future cost functions. For example, given perfect lookahead in a prediction window of length $w$, there exist algorithms whose competitive ratio is $1 + O(1/w)$, independent of dimension \cite{lin2012}. Similar positive results are possible in cases with noisy predictions, e.g., \cite{chen2015,chen2016}.

Given Proposition \ref{prop.oco}, the positive results described above can yield effective algorithms for the single timescale, optimal control with linear dynamics in \eqref{e.fastonly}. To highlight this, we focus on a particularly promising algorithm from the SOCO literature called \textit{Averaging Fixed Horizon Control (AFHC)}.

AFHC was introduced in \cite{lin2012} and has since been studied in \cite{chen2015,chen2016,badiei2015online}.  AFHC is parameterized by the size of the prediction window it uses, which we denote by $w$. It works by averaging together the control actions of $w + 1$ independent Fixed Horizon Control (FHC) algorithms. The $k$-th FHC algorithm ($k = 1 \ldots w + 1$) starts at timestep $k$ by greedily choosing the set of control actions that minimize the cost over time $[k, k + w]$, and then repeatedly chooses control actions to minimize cost over each consecutive length $w + 1$ window. The control action output by AFHC is the average of the control actions of all $w + 1$ FHC algorithms.

In general, \cite{lin2012} proves that AFHC is $1+O(1/w)$ competitive for SOCO problems with costs bounded below by a positive constant $c_0$.  In the setting of this paper, we can prove a more precise result that highlights the impact of the structure of the dynamics.  

\begin{thm} \label{e.afhc}
Suppose each $c_t$ is $m$-strongly convex and bounded below by a positive constant $c_0$. Then the competitive ratio of AFHC for (3) is at most $$1 + \frac{\|(B^f)^{-1}A\|^2}{2m(w+1)c_0}$$ and in particular is $1 + O(1/w)$. 
\end{thm}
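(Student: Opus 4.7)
The plan is a three-stage argument in the standard AFHC mold, with the specific constants emerging from the interaction between strong convexity and the dynamics-matrix factor appearing in the switching cost.

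\emph{Step 1: Reduce AFHC to its constituent FHCs via Jensen.} By construction, AFHC plays $y_t = \frac{1}{w+1}\sum_{k=1}^{w+1} y_t^{(k)}$, where $y_t^{(k)}$ is the action of the $k$-th FHC instance. Both the hitting cost $c_t$ (convex by hypothesis) and the switching cost $\|(B^f)^{-1}(y_t - A y_{t-1})\|$ (a norm composed with a linear map) are jointly convex in the decisions, so Jensen's inequality gives $\text{AFHC} \le \frac{1}{w+1}\sum_{k=1}^{w+1} \text{FHC-}k$. The remainder of the proof bounds each $\text{FHC-}k$.

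\emph{Step 2: Per-block comparison to OPT.} Fix $k$ and consider any block $[\tau,\tau+w]$ over which FHC-$k$ commits. Within the block FHC-$k$ plays the cost-minimizing trajectory subject to its inherited state $y^{(k)}_{\tau-1}$. Comparing against the ``jump-and-follow'' alternative that switches at time $\tau$ to $y^*_\tau$ and then tracks $y^*$ through $\tau+w$, we obtain
\[
\text{FHC-}k \text{ cost on } [\tau,\tau+w] \;\le\; \text{OPT cost on } [\tau,\tau+w] \;+\; \bigl\|(B^f)^{-1}(y^*_\tau - A y^{(k)}_{\tau-1})\bigr\|,
\]
i.e., OPT's in-block contribution plus a single boundary-switching surcharge.

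\emph{Step 3: Control the boundary surcharge by strong convexity.} This is where $\|(B^f)^{-1}A\|^2/(2m)$ enters. Write
\[
\bigl\|(B^f)^{-1}(y^*_\tau - A y^{(k)}_{\tau-1})\bigr\| \le \bigl\|(B^f)^{-1}(y^*_\tau - A y^*_{\tau-1})\bigr\| + \|(B^f)^{-1}A\|\cdot\|y^*_{\tau-1}-y^{(k)}_{\tau-1}\|.
\]
The first term is OPT's own switching cost at $\tau$, already contained in OPT. For the second, $m$-strong convexity of $c_{\tau-1}$ converts any excess hitting cost into a squared-distance bound, so the product $\|(B^f)^{-1}A\|\cdot\|y^*_{\tau-1}-y^{(k)}_{\tau-1}\|$ can be controlled by AM-GM $ab\le a^2/(2m)+mb^2/2$ with the quadratic piece absorbed into hitting costs already charged to OPT, leaving an additive surcharge of order $\|(B^f)^{-1}A\|^2/(2m)$ per block boundary.

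\emph{Step 4: Aggregate and convert to a competitive ratio.} Each FHC-$k$ has $T/(w+1)$ block boundaries, so summing Step 2 and Step 3 over its blocks yields $\text{FHC-}k \le \text{OPT} + \frac{T}{w+1}\cdot\frac{\|(B^f)^{-1}A\|^2}{2m}$. Averaging over $k$ via Step 1 preserves this additive penalty. Finally, use the uniform lower bound $c_t(y_t^*)\ge c_0$ to conclude $\text{OPT}\ge T c_0$, so dividing through gives $\text{AFHC}/\text{OPT} \le 1 + \frac{\|(B^f)^{-1}A\|^2}{2m(w+1)c_0}$, as claimed. I expect the main obstacle to be the bookkeeping in Step 3: carefully accounting for the fact that $y^{(k)}_{\tau-1}$ is itself the product of FHC-$k$'s previous optimization (not a free parameter), and choosing the AM-GM split so that the quadratic residual is fully absorbed by OPT's already-paid hitting costs rather than bleeding into extra terms that would worsen the claimed constant.
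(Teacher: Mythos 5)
Your plan follows the paper's proof almost step for step: Jensen's inequality over the $w+1$ FHC instances, a per-block comparison to OPT with a boundary surcharge split by the triangle inequality into OPT's own switching cost plus $\|(B^f)^{-1}A\|\,\|y^*_{\tau-1}-y^{(k)}_{\tau-1}\|$, a completion-of-the-square against an $\frac{m}{2}d^2$ slack, and the final division by $OPT\ge c_0 T$. The one step that would fail as literally written is your claim that the squared-distance slack comes from $m$-strong convexity of the single hitting cost $c_{\tau-1}$: neither $y^*_{\tau-1}$ nor $y^{(k)}_{\tau-1}$ minimizes $c_{\tau-1}$, so strong convexity of that one function leaves an uncontrolled gradient term and by itself gives no lower bound on any ``excess hitting cost'' in terms of $\|y^*_{\tau-1}-y^{(k)}_{\tau-1}\|^2$. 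The paper's fix is to observe that the entire block objective $g_{\tau,\tau+w}$ (hitting plus switching costs over the block, anchored at the inherited state $y^{(k)}_{\tau-1}$) is itself $m$-strongly convex and is \emph{minimized} by the FHC trajectory, so first-order optimality kills the gradient term and yields $g_{\tau,\tau+w}(y^{(k)})\le g_{\tau,\tau+w}(y^*)-\frac{m}{2}\sum_{t=\tau}^{\tau+w}\|y^{(k)}_t-y^*_t\|^2$; this strengthens your Step 2 inequality by exactly the quadratic slack you need, and the $-\frac{m}{2}d_{\tau-1}^2$ carried over from the preceding block is then paired with the linear boundary term of the next block, with $\max_d\left(ad-\frac{m}{2}d^2\right)=\frac{a^2}{2m}$ giving the per-boundary surcharge of $\frac{\|(B^f)^{-1}A\|^2}{2m}$. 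With that substitution your aggregation in Step 4 goes through verbatim.
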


This result highlights the ability of AFHC to perform well in a classic control problem, even in the \emph{adversarial} setting. Further, the bound in the theorem highlights the impact of the structure of the dynamics on the performance of the algorithm. In general, as $w$ tends to infinity the competitive ratio of AFHC will tend to one. This is unsurprising, since the algorithm will have access to more and more information about the future and hence will be able to make better decisions. However, Theorem \ref{e.afhc} shows that even when $w$ is small, AFHC can attain near optimal performance provided $\|(B^f)^{-1}A\|$ is sufficiently small. The matrix $A$ can be interpreted as the gain of the system dynamics, and the matrix $B^f$ as the gain of the fast controller; hence the expression $\|(B^f)^{-1}A\|$ is intuitively a measure of the fast controllers ability to counteract the gain of the system. 

\begin{proof} Let $\Omega_k = \{k, k + w, k + 2w, \ldots \}$ be the set of times when the $k$-th FHC algorithm recomputes its control trajectory. Let $y^*$ denote the optimal trajectory for (3) and $y^k$ denote the choice of the $k$-th FHC algorithm. We define a function which measures the total cost incurred by a trajectory over $[s, s + w]$, starting from the point $y^k_{s-1}$: 

\begin{align}
g_{s, s+ w}(y) =& \sum_{t = s}^{s + w} c_t(y_t) + \sum_{t = s + 1}^{s + w} \|(B^f)^{-1}(y_t - Ay_{t-1} )\|  \nonumber \\
&+ \|(B^f)^{-1}(y_s - Ay^k_{s-1})\| \nonumber
\end{align}
Notice that $g_{s, s+w}$ is itself $m$-strongly convex; it is the sum of the $m$-strongly convex cost functions and the convex switching costs. Hence for all $s$ we have 

\begin{align}
g_{s, s+ w}(y^k) - g_{s, s+ w}(y^*) \leq & \nabla g_{s, s+w}(y^k)^T(y^k - y^*) \nonumber \\
&-\frac{m}{2}\sum_{t = s}^{s + w} \|y^k_t - y^*_{t}\|^2 \nonumber
\end{align}
Notice that for all $s \in \Omega_k$, the gradient term vanishes, since by definition the $k$-th FHC algorithm chooses a trajectory which minimizes $g_{s, s + w}$ at each $s \in \Omega_k$. Letting $d_t = \|y^k_t - y^*_{t} \| $ and summing up over all $s \in \Omega_k$ gives: $$\sum_{s \in \Omega_k} g_{s, s+ w}(y^k) - \sum_{s \in \Omega_k} g_{s, s+ w}(y^*) \leq -\frac{m}{2}\sum_{s \in \Omega_k} \sum_{t = s}^{s + w} d_{t}^2 $$
The first sum on the left hand side is the total cost incurred by the $k$-th FHC algorithm, which we denote by $FHC^k$. Using the definition of $g_{s, s+w}$ and the reverse triangle inequality, we can bound the second term: $$\sum_{s \in \Omega_k} g_{s, s+ w}(y^*) \leq OPT + \|(B^f)^{-1}A \|\sum_{s \in \Omega_k} d_{s-1}$$ from which we obtain $$FHC^k - OPT \leq \sum_{s \in \Omega_k}  \|(B^f)^{-1}A \| d_{s-1} -\frac{m}{2} d_{s-1}^2 $$
Here we used the fact that $-\frac{m}{2}d_t^{2} $ is always nonpositive, so throwing away some of these terms can only increase the righthand side.
Maximizing the summands  in $d_{s-1}$, we obtain $$FHC^k - OPT \leq \sum_{s \in \Omega_k} \frac{\|(B^f)^{-1}A\|^2}{2m} $$ 
We average all $w + 1$ FHC algorithms and apply Jensen's Inequality to obtain $$AFHC - OPT \leq  \frac{1}{w +1}\sum_{t = 1}^T  \frac{\|(B^f)^{-1}A\|^2}{2m} $$
Finally we divide by $OPT$ and use the bound $OPT \geq c_0T$ to get a bound on the competitive ratio: $$1 + \frac{\|(B^f)^{-1}A\|^2}{2m(w+1)c_0} $$
which establishes the $1 + O(1/w)$ claim. 
\end{proof}

\section{Architectural Decomposition for Multi-Timescale Control} \label{s.multi}

We now turn our attention to the joint multi-timescale control problem in \eqref{e.globalopt}, and focus on the co-design of fast and slow controllers. Recall that, while the slow controller cannot act as frequently, there are two benefits it usually provides: (i) it may have more information and computational power than the fast controller, e.g., in software defined networking and robotics, and (ii) it may be cheaper to operate than the fast controller, e.g., when scheduling generation in the smart grid. To capture these benefits of a slow controller, we consider a setting where the slow controller has access to noisy predictions but the fast controller does not.  We also specifically highlight the case where the slow controller is cheaper to operate, though our results apply more generally. 

Our main result in this section provides a performance bound for a new, near-optimal algorithm -- \emph{Multi-timescale Reflexive Predictive Control} (\name) -- that consists of a simple, reflexive fast timescale controller and a predictive slow timescale controller. For concreteness and ease of presentation we focus on the case where the cost functions $c_x, c_s, c_f$ in \eqref{e.globalopt} are norms $\| \cdot \|_x, \| \cdot \|_s, \| \cdot \|_f$.

\subsection{An overview of \name}

Informally, \name\ works as follows.  Over each slow timescale slot, the slow controller greedily plays the slow control action which minimizes the expected cost using the predictions $\hat{w}_t$, under the assumption that the fast controller will keep the state at zero. As the true noise increments $w_t$ are revealed one by one, the fast controller myopically corrects any noise so as to keep the state at zero.

Formally, let $\hat{f}$ and $\hat{s}$ denote the fast and slow control actions of \name. Then, the operation of each is as follows: 
\begin{align} 
\hat{s}_r& = \min_s \left[ k \|s_r\|_s + \sum_{t =r }^{r + k -1} \|(B^f)^{-1}(B^ss_r + \hat{w}_t)\|_f \right] \label{e.slowaction} \\
\hat{f}_t &= -(B^f)^{-1}(B^s\hat{s}_r + w_t) \hspace{6mm} t = r, \ldots {r + k - 1}  \label{e.fastaction}
\end{align}

Notice that the fast controller is very simple; it uses no predictions and performs no optimization. All of the prediction and optimization is shifted onto the slow controller. This is consistent with how the two controllers are used in many applications, where the slow controller is often centralized, with access to global information, but the fast controllers are usually decentralized, localized, and computationally limited. For example, in the smart grid a slow timescale global optimization problem is solved (economic dispatch) and then localized fast timescale controllers myopically correct any deviations that may arise (frequency regulation).


\subsection{Performance of \name}

Our main technical result is a performance bound for \name.  In particular, the following result shows that, despite the difficulty of the multi-timescale control problem, \name\ maintains a per-stage cost within a constant factor of the offline optimal, even when adversarial inputs are considered.  

\begin{thm} \label{t.multitimescale_cr} Assume the cost functions $c_x, c_s, c_f$ in \eqref{e.globalopt} are norms $\| \cdot \|_x, \| \cdot \|_s, \| \cdot \|_f$. Then
\name\ has an average per-stage cost within a constant factor of optimal.  Specifically, 
\begin{align*}
\frac{MRPC}{T} \leq & \max{\left(\frac{\left( 1 + \| A \|_x \right)\|(B^f)^{-1}\|}{c}, 1 \right)} \frac{OPT}{T} \\ 
& \quad\quad\quad\quad + 2\|(B^f)^{-1}\| 
E(\hat{w},w)   \nonumber 
\end{align*}
where $c$ is a constant such that $\| v \|_x \geq c \| v \|_f$ for all $v$ and $E(\hat{w}, w)$ is the sample path average prediction error:
$$E(\hat{w}, w) = \frac{1}{T} \sum_{t = 1}^T \|\hat{w}_t - w_t \|_f$$
\end{thm}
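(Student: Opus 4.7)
The plan rests on two structural facts: first, the fast controller of \name\ is a pure cancellation feedback law, so the state of the closed-loop system is identically zero; second, the slow action $\hat{s}_r$ is defined as the exact minimizer of the predicted per-slot cost, which lets me compare it to the offline optimal $s^*_r$ with only prediction-error slack. I will start by substituting $\hat{f}_t = -(B^f)^{-1}(B^s\hat{s}_r + w_t)$ into the dynamics, observing that $x_t = Ax_{t-1}$, and concluding $x_t = 0$ for all $t$ from $x_0 = 0$. The state-cost term in MRPC therefore vanishes, leaving
$$MRPC = \sum_r k\|\hat{s}_r\|_s + \sum_t \|(B^f)^{-1}(B^s\hat{s}_r + w_t)\|_f.$$

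Next I would swap the true noise $w_t$ for the prediction $\hat{w}_t$ inside each fast-correction norm by the triangle inequality, paying $\|(B^f)^{-1}\|\,\|\hat{w}_t - w_t\|_f$ per step. This reshapes the right-hand side into the predicted objective of \eqref{e.slowaction}, at which point the optimality of $\hat{s}_r$ lets me replace it by $s^*_r$. A second triangle inequality swaps $\hat{w}_t$ back to $w_t$, incurring the prediction error a second time. The outcome is
$$MRPC \leq \sum_r k\|s^*_r\|_s + \sum_t \|(B^f)^{-1}(B^s s^*_r + w_t)\|_f + 2\|(B^f)^{-1}\|\sum_t \|\hat{w}_t - w_t\|_f.$$

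The technical heart of the argument is bounding the middle sum by OPT. Here I would use the offline-optimal dynamics to write $B^s s^*_r + w_t = x^*_t - Ax^*_{t-1} - B^f f^*_t$, apply the triangle inequality, pull out the operator norm $\|(B^f)^{-1}\|$, and use the hypothesis $\|v\|_f \leq \|v\|_x/c$ on the state terms together with $\|Av\|_x \leq \|A\|_x\|v\|_x$ to obtain
$$\|(B^f)^{-1}(B^s s^*_r + w_t)\|_f \leq \frac{\|(B^f)^{-1}\|}{c}\bigl(\|x^*_t\|_x + \|A\|_x\|x^*_{t-1}\|_x\bigr) + \|f^*_t\|_f.$$
Summing over $t$, I would use $x^*_0 = 0$ to collapse the shifted and unshifted state sums into $(1+\|A\|_x)\sum_t \|x^*_t\|_x$. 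The three resulting terms line up exactly with the three pieces of OPT, with coefficient $(1+\|A\|_x)\|(B^f)^{-1}\|/c$ on the state cost and coefficient $1$ on the fast and slow costs, so pulling out the maximum and dividing by $T$ delivers the stated bound.

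The main obstacle I expect is bookkeeping rather than a single conceptual hurdle: I have to make sure the swap-predict-swap maneuver is carried out in the right order so that the factor $2$ on the prediction-error term is tight, and I have to verify that the max of the two coefficients simultaneously dominates all three cost components of OPT without inflating the prediction-error term. A secondary subtlety is that the index shift $\sum_t \|x^*_{t-1}\|_x \leq \sum_t \|x^*_t\|_x$ relies crucially on the initial condition $x^*_0 = 0$; if one generalized the model to nonzero initial states, the bound would pick up an additive constant in $\|x^*_0\|_x$.
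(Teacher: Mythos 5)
Your proposal is correct, and the first half (state stays at zero under the reflexive fast law, then the swap--optimize--swap maneuver with the triangle inequality giving the factor $2\|(B^f)^{-1}\|\sum_t\|\hat w_t - w_t\|_f$) coincides with the paper's Lemma~\ref{l.upperalg}. Where you genuinely diverge is in the comparison to $OPT$. The paper never touches the offline trajectory directly: it proves a standalone lower bound on $OPT$ (Lemma~\ref{l.loweropt}) by manipulating an arbitrary feasible cost through the reverse triangle inequality to get $COST \geq -\|A\|_x COST + \cdots$, and then invokes a separate norm-minimization lemma (Lemma~\ref{e.lowerlemma}) to eliminate the fast control variable, yielding $OPT \geq \min_s \sum_r [k\|s_r\|_s + C\sum_t\|(B^f)^{-1}(B^s s_r + w_t)\|_f]$. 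You instead instantiate the minimization at the offline optimal $s^*$, use the identity $B^s s^*_r + w_t = x^*_t - Ax^*_{t-1} - B^f f^*_t$ from the offline dynamics, and bound term by term against the three pieces of $OPT$; the index shift handled by $x^*_0=0$ recovers exactly the same constant $(1+\|A\|_x)\|(B^f)^{-1}\|/c$. Your route is more elementary --- it needs no auxiliary lemma and no implicit-inequality rearrangement --- and it makes transparent why the max of two coefficients appears (one coefficient multiplies the state cost, the coefficient $1$ multiplies the control costs). What the paper's detour buys is the lower bound on $OPT$ as an object in its own right: that bound is a statement about the problem (a min over all $s$, not just $s^*$), and the authors use its form to \emph{motivate} the decomposition of \name\ into a reflexive fast controller and a predictive slow controller, which your argument, being a pure a posteriori verification, does not expose.
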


Before moving to the proof, let us make a few remarks about Theorem \ref{t.multitimescale_cr}.  To begin, recall that even the single timescale problem could not be solved optimally by a fast timescale controller, and so the performance bound in Theorem \ref{t.multitimescale_cr} is surprisingly strong, especially given that the fast time scale controller in \name\ does not use any predictions -- it is simply reflexive.  

To get intuition for the bound itself, let us first look at the second term.  The second term in the bound corresponds to the inefficiency due to noisy predictions. In particular, if we assume perfect lookahead (i.e $\hat{w}_t = w_t$ for all $t$), then the second term disappears.  Thus, we see that prediction error has only an additive effect. It is important to realize that the analysis makes no modeling assumptions on the form of the prediction error.  The error can be adversarial or stochastic and the result still holds. 

The first term bounds the per-step cost incurred by our algorithm relative to the per-step cost incurred by the offline optimal.  To get intuition for it,  consider the case where control costs dominate the state costs.  Specifically, consider the case where $c \geq 2 \|(B^f)^{-1} \|$, and there are no errors in predictions. In this case, we have $MRPC = OPT$. It is worth highlighting this result in words: \textit{when state costs dominate control costs and prediction errors are small, our distributed algorithm achieves the optimal value of \eqref{e.globalopt}}. This is remarkable, since the offline optimal has a formidable advantage compared to our online algorithm - it knows the full noise vector $w$ in advance, whereas during each slow timescale interval our online algorithm only has access to predictions about the noise in that interval.

Finally, it is important to note that Theorem \ref{t.multitimescale_cr} is incomparable to Theorem \ref{e.afhc} since Theorem \ref{t.multitimescale_cr} compares to the offline optimal of the multi-timescale problem while Theorem \ref{e.afhc} compares to the offline optimal of a single stage problem.  In settings where the slow timescale controller is much cheaper than the fast timescale controller the cost difference between these problems can be arbitrarily large. 

We now move to the proof of Theorem \ref{t.multitimescale_cr}.  The proof is technical, but also provide crucial intuition into the form of \name. In particular, the proof includes a lower bound on the offline optimal which motivates the decomposition between the fast and slow timescales used in the design of \name.  It is this bound that highlights the ability to obtain timescale separation via the optimization decomposition in \name.

\subsection{Proof of Theorem \ref{t.multitimescale_cr}} 

Recall, that we are considering the case where the convex cost functions in \eqref{e.globalopt} are norms. Further, it is convenient to absorb the constraint on the slow controller directly into the objective and rewrite \eqref{e.globalopt} as 
\begin{align}
  \min_{x, f, s} \quad & \sum_{r \in \mathcal{S}} \left[ k \|s_r\|_s + \sum_{t =r }^{r + k -1} \|x_t\|_x + \|f_t\|_f \right] \label{e.globalopt2} \\
\text{s.t.} \quad &  x_t = Ax_{t-1} + B^ff_t + B^ss_t + w_t \nonumber \\
   & x_0 = 0 \nonumber 
\end{align}
Here $\mathcal{S} = \{1, k + 1, 2k + 1, \ldots  \}$ is the set of slow timescale steps, i.e., when the slow controller can change its control action.

The first step in the analysis is to establish a lower bound on the cost incurred by the offline optimal. As mentioned above, this lower bound highlights the decomposition between fast and slow used in the design of \name.

To prove the lower bound we make use of the following technical lemma.

\begin{lemma} \label{e.lowerlemma}
Let $v \in \mathbb{R} ^n$, and let $M \in \mathbb{R}^{n \times n}$ be an invertible matrix. Let $\| \cdot \|_a, \| \cdot \|_b$ be any two norms on $\mathbb{R}^n$, and let $c$ be a constant such that $\|v\|_a \geq c \|v\|_b$ for all $v$. 
For all $\alpha, \beta > 0$ we have
\[\min_{x}{ \alpha \|v + Mx\|_a + \beta \|x\|_b } \geq \min{\left( \frac{\alpha c}{\|M^{-1}\|_b}, \beta   \right) \|M^{-1} v\|_b}.
\]

\label{lemma: norm_minimization}
\end{lemma}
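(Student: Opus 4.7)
The plan is to reduce both terms of the objective to expressions involving only the norm $\|\cdot\|_b$ and a common translation structure, after which the triangle inequality finishes the argument. The key observation is that $M^{-1}(v+Mx) = M^{-1}v + x$, so pre-multiplying inside the first norm by $M^{-1}$ produces an expression that differs from $M^{-1}v$ by exactly $x$, which is the quantity appearing in the second term.

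First I would handle the mismatch between the two norms. By the hypothesis $\|u\|_a \ge c\|u\|_b$ applied with $u = v+Mx$, we have $\alpha\|v+Mx\|_a \ge \alpha c\,\|v+Mx\|_b$. Next, to pull $M^{-1}$ inside, I would use the standard induced-norm inequality $\|M^{-1}y\|_b \le \|M^{-1}\|_b\,\|y\|_b$ with $y = v+Mx$, which rearranges to
\begin{equation*}
\|v+Mx\|_b \;\ge\; \frac{\|M^{-1}v + x\|_b}{\|M^{-1}\|_b}.
\end{equation*}
Combining these two steps lower bounds the objective by
\begin{equation*}
\frac{\alpha c}{\|M^{-1}\|_b}\,\|M^{-1}v + x\|_b \;+\; \beta\,\|x\|_b.
\end{equation*}

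Now both terms are expressed in $\|\cdot\|_b$, and they can be combined using the reverse triangle trick: writing $M^{-1}v = (M^{-1}v + x) - x$, the triangle inequality gives $\|M^{-1}v\|_b \le \|M^{-1}v + x\|_b + \|x\|_b$. Setting $\gamma_1 = \alpha c/\|M^{-1}\|_b$ and $\gamma_2 = \beta$, the lower bound becomes
\begin{equation*}
\gamma_1\|M^{-1}v + x\|_b + \gamma_2\|x\|_b \;\ge\; \min(\gamma_1,\gamma_2)\bigl(\|M^{-1}v+x\|_b + \|x\|_b\bigr) \;\ge\; \min(\gamma_1,\gamma_2)\,\|M^{-1}v\|_b.
\end{equation*}
Since this holds for every $x$, taking the minimum over $x$ yields the claimed inequality.

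The only genuinely tricky step is the second one: recognizing that one must act by $M^{-1}$ inside the first norm (at the cost of a factor $\|M^{-1}\|_b$) to make the two norms ``see'' the same translation variable. Once that alignment is made, the rest is triangle-inequality bookkeeping. A minor point to watch is that the hypothesis $\alpha,\beta>0$ combined with $M$ invertible ensures $\|M^{-1}\|_b>0$, so the expression $\alpha c/\|M^{-1}\|_b$ is well defined; and because the argument is done for each fixed $x$ before minimizing, no convexity or compactness assumption is needed beyond the finite-dimensional norm properties already in force.
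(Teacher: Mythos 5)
Your proof is correct and follows essentially the same route as the paper's: apply the norm-equivalence constant $c$, rewrite $v+Mx$ as $M(M^{-1}v+x)$ and use $\|My\|_b \ge \|y\|_b/\|M^{-1}\|_b$, then finish with a triangle-inequality argument. The only (immaterial) difference is in the last step, where the paper uses the reverse triangle inequality followed by a one-dimensional minimization over $\|x\|_b$, while you factor out $\min(\gamma_1,\gamma_2)$ and apply the ordinary triangle inequality directly.
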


\begin{proof}
We have: 
\begin{align*}
&\min_{x}{ \alpha \|v + Mx\|_a + \beta \|x\|_b } \\
\ge &\min_{x}{ \alpha c  \|M (x + M^{-1}v)\|_b + \beta  \|x\|_b } \\
\ge &\min_{x}{  \frac{\alpha c}{\|M^{-1}\|_b}\|x + M^{-1}v\|_b + \beta  \|x\|_b } \\
\geq &\min_{x} \frac{\alpha c}{\|M^{-1}\|_b} \Big| \| x\|_b - \|M^{-1} v\|_b \Big| + \beta  \|x\|_b 
\end{align*}
The first inequality follows from the equivalence of norms in finite dimensional linear spaces. The second inequality is because $\| y \| = \|M^{-1} M y\| \le \| M^{-1}\|\|My\|$, hence $\|My\|\ge \frac{1}{\|M^{-1}\|}\|y\|$ for all $y$. The last inequality is just the reverse triangle inequality. 


The last optimization is an optimization over the scalar variable $\| x\|_b$ and it is easy to see that it is lower bounded by $$ \min{\left( \frac{\alpha c}{\|M^{-1}\|}, \beta   \right)\|M^{-1}v\|_b}. $$
\end{proof}
Now we are ready to prove the lower bound. 

\begin{lemma} \label{l.loweropt}
Letting $OPT$ denote the optimal solution for \eqref{e.globalopt2}, we have:
\begin{align} 
 OPT \geq \min_{s} \sum_{r \in \mathcal{S}} \left[  k  \|s_r\|_s +   C\sum_{t =r }^{r + k -1}   \| (B^f)^{-1}(B^s s_t + w_t)\|_f  \right] \nonumber 
\end{align}
where $$C = \min{\left(\frac{c}{ \left( 1 + \| A \|_x \right)\|(B^f)^{-1}\|}, 1\right)}$$ and $c$ is a  constant such that $\|v\|_x \geq c\|v\|_f$ for all $v$.
\end{lemma}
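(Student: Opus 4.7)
The plan is to prove the lower bound by deriving a per-timestep inequality that relates the fast control cost $\|f_t\|_f$ and the state cost $\|x_t\|_x$ of any feasible trajectory to the quantity $\|(B^f)^{-1}(B^s s_t + w_t)\|_f$ appearing on the right-hand side, summing these inequalities over $t$, and then taking an infimum over $s$. Because the slow cost $k\|s_r\|_s$ already appears identically on both sides, it is carried along essentially for free and I only need to handle the fast-and-state part of the objective.

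The central step is to invert the dynamics. For any feasible $(x,f,s)$, the relation $x_t = A x_{t-1} + B^f f_t + B^s s_t + w_t$ rearranges to $(B^f)^{-1}(B^s s_t + w_t) = (B^f)^{-1}(x_t - A x_{t-1}) - f_t$, so the triangle inequality in $\|\cdot\|_f$ gives
\[
  \|(B^f)^{-1}(B^s s_t + w_t)\|_f \;\le\; \|f_t\|_f + \|(B^f)^{-1}(x_t - A x_{t-1})\|_f.
\]
I would then bound the second term by combining (i) the operator norm of $(B^f)^{-1}$ induced by $\|\cdot\|_f$, (ii) the norm equivalence $\|v\|_f \le \|v\|_x/c$, and (iii) the operator norm $\|A\|_x$, yielding
\[
  \|(B^f)^{-1}(x_t - A x_{t-1})\|_f \;\le\; \tfrac{\|(B^f)^{-1}\|}{c}\bigl(\|x_t\|_x + \|A\|_x\|x_{t-1}\|_x\bigr).
\]

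Summing over $t=1,\ldots,T$ and using $x_0 = 0$ to merge $\sum_t \|x_{t-1}\|_x$ into $\sum_t \|x_t\|_x$ gives
\[
  \sum_t \|(B^f)^{-1}(B^s s_t + w_t)\|_f \;\le\; \sum_t \|f_t\|_f + D \sum_t \|x_t\|_x,
\]
with $D = \|(B^f)^{-1}\|(1+\|A\|_x)/c$. Bounding the right-hand side by $\max(1,D)\sum_t(\|x_t\|_x+\|f_t\|_f)$ and setting $C := \min(1,1/D)$—which is exactly the constant in the statement—yields $C\sum_t\|(B^f)^{-1}(B^s s_t + w_t)\|_f \le \sum_t(\|x_t\|_x+\|f_t\|_f)$. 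Adding $\sum_r k\|s_r\|_s$ to both sides and taking the minimum over all feasible $(x,f,s)$ then produces the claim, noting that the left-hand side of the resulting inequality depends only on $s$.

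The step I expect to be the subtlest is choosing the correct direction in which to expose the quantity $\|(B^f)^{-1}(B^s s_t + w_t)\|_f$. The natural first attempt—applying Lemma~\ref{e.lowerlemma} to $\|x_t\|_x+\|f_t\|_f$ per timestep—produces a mixed term $\|(B^f)^{-1}(A x_{t-1} + B^s s_t + w_t)\|_f$, and splitting off the $A x_{t-1}$ contribution afterwards via reverse triangle does not cleanly deliver the constant $\min(1, c/((1+\|A\|_x)\|(B^f)^{-1}\|))$. Working in the reverse direction as above, so that the disturbance norm is upper-bounded by the per-step control and state costs, is what makes the $(1+\|A\|_x)$ factor emerge naturally after the $x_0=0$ reindexing and yields the precise constant stated in the lemma.
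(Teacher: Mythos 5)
Your proof is correct, but it takes a genuinely different route from the paper's. The paper works in the opposite direction: it starts from the cost of an arbitrary feasible trajectory, applies the \emph{reverse} triangle inequality to $\|x_t\|_x = \|Ax_{t-1} + B^f f_t + B^s s_r + w_t\|_x$, absorbs the accumulated $\|A\|_x\|x_{t-1}\|_x$ terms into the total cost (using $x_0=0$, as you do) to get
\begin{align*}
COST \;\geq\; \sum_{r \in \mathcal{S}}\Big[ k\|s_r\|_s + \sum_{t=r}^{r+k-1} \tfrac{1}{1+\|A\|_x}\|B^f f_t + B^s s_r + w_t\|_x + \|f_t\|_f\Big],
\end{align*}
and then eliminates $f_t$ by invoking Lemma~\ref{e.lowerlemma} pointwise in $t$ (with $M = B^f$, $v = B^s s_r + w_t$, $\alpha = 1/(1+\|A\|_x)$, $\beta = 1$), which is where the constant $C$ appears. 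You instead invert the dynamics to write $(B^f)^{-1}(B^s s_t + w_t) = (B^f)^{-1}(x_t - Ax_{t-1}) - f_t$ and upper-bound the target quantity by the feasible cost with a single chain of forward triangle inequalities, handling the $Ax_{t-1}$ term and the $f_t$ term simultaneously and bypassing Lemma~\ref{e.lowerlemma} entirely. Both arguments use the same three ingredients (norm equivalence constant $c$, the induced norms $\|A\|_x$ and $\|(B^f)^{-1}\|$, and the $x_0=0$ reindexing) and land on the identical constant $C = \min\bigl(c/((1+\|A\|_x)\|(B^f)^{-1}\|),\,1\bigr)$. What each buys: your version is shorter, self-contained, and makes transparent that the reflexive correction $(B^f)^{-1}(B^s s_t + w_t)$ is directly dominated (up to $1/C$) by any feasible trajectory's per-step cost; the paper's version factors out a reusable minimization lemma and exhibits the lower bound as the value of an explicit optimization over $f$, which is the structural form the authors use to motivate the fast/slow decomposition in \name. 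One presentational caution: your intermediate bound sums over all $t$ at once, so when you add the slow cost and pass to the minimum over $s$ you should (as you do) note that the left side depends on $s$ only through the values $s_r$, $r \in \mathcal{S}$, so evaluating at the optimal $s^*$ and then relaxing to $\min_s$ is legitimate.
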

\begin{proof}
Suppose $\hat{x}, \hat{f}, \hat{s}$ are some arbitrary feasible choices of the decision variables in \eqref{e.globalopt2}, which incur the associated cost $COST$. We have 
\begin{align}
COST =& \sum_{r \in \mathcal{S}}\sum_{t =r }^{r + k -1}  \|\hat x_t\|_x +  \|\hat f_t \|_f +  \|\hat s_r \|_s \nonumber \\
=&  \sum_{r \in \mathcal{S}}\sum_{t =r }^{r + k -1}  \|A\hat x_{t-1} + B^f\hat{f_t} + B^s\hat{s_r} + w_t\|_x \nonumber \\
& \hspace{13mm} +  \|\hat{f_t}\|_f +  \|\hat s_t\|_s   \nonumber \\
 \geq &   \sum_{r \in \mathcal{S}}\sum_{t =r }^{r + k -1} \Big[ \| B^f\hat{f_t} + B^s\hat{s_r} + w_t\|_x - \|A \|_x \| \hat{x}_{t-1} \|_x \nonumber \\
& \hspace{13mm} +  \|\hat{f_t}\|_f +  \|\hat{s_r}\|_s \Big] \nonumber \\
\geq & - \|A\|_x COST + \sum_{r \in \mathcal{S}}\sum_{t =r }^{r + k -1}  \| B^f\hat{f_t} + B^s\hat{s_r} + w_t\|_x   \nonumber \\
& \hspace{25mm} +  (1 + \| A \|_x) \left(\|\hat{f_t}\|_f +  \|\hat{s_r}\|_s \right)  \nonumber
\end{align}
from which we obtain the lower bound on $COST$ given by
{\small
\begin{align}
\sum_{r \in \mathcal{S}} \left[ k  \|\hat s_r\|_s + \sum_{t =r }^{r + k -1}  \beta \| B^f\hat f_t + B^s\hat s_r + w_t\|_x +  \|\hat f_t\|_f \right]  \nonumber 
\end{align}
}
\noindent where $\beta = \frac{1}{1 + \| A \|_x}$. Since $\hat{x}, \hat{f}, \hat{s}$ were arbitrary feasible values, and in particular, could be taken to be the optimal values for $\eqref{e.globalopt2}$, we obtain a lower bound on $OPT$ given by
{\small
\begin{align}
& \min_{f, s} \sum_{r \in \mathcal{S}} \left[ k  \|\hat s_r\|_s +  \sum_{t =r }^{r + k -1}  \beta \|B^f f_t + B^s s_r + w_t\|_x +  \|f_t\|_f \right].  \nonumber
\end{align}
}
Examining the structure of this expression, we observe that once each $s_r$ is fixed, the resulting optimization in $f$ resembles that in Lemma \ref{e.lowerlemma}, which leads directly to the theorem.
\end{proof}

The lower bound has the following interpretation.  Suppose the state is set at zero.  After the slow controller has set its action to be $s_r$, the fast control action which corrects the remaining deviation from zero is $(B^f)^{-1}(B^s s_r + w_t)$, and our lower bound is the sum of the resulting costs (up to the constant $C$). Notice that the fast controller is extremely simple - all it does is continually correct any residual noise so that the state is always kept at zero. This is a crucial observation: \textit{the form of the lower bound highlights a clear separation between a ``smart'', slow controller that does the planning and a ``dumb'' reactive fast controller}. This separation is then what we mimic in the design of \name, and also guides our analysis of the algorithm, as is evident in the following lemma, which  provides an upper bound on the cost of \name. 

\begin{lemma} \label{l.upperalg}
{\small 
\begin{align}
MRPC \leq& \min_{ s} \sum_{r \in \mathcal{S}} \left[ k \|{s_r}\|_s +  \sum_{t =r }^{r + k -1}  \|(B^f)^{-1}(B^s s_r + w_t)\|_f \right] \nonumber \\
	&+ 2\|(B^f)^{-1}\|\sum_{t = 1}^T \|\hat{w_t} - w_{t} \|_f \nonumber 
\end{align}
}
\end{lemma}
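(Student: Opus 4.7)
The plan is to reduce the bound to a two-step argument: first observe that MRPC's reflexive fast controller zeroes the state, so MRPC's realized cost has a clean closed-form in terms of the true noise; second, compare that realized cost to the minimum over $s$ of the same functional (with true $w$) by exploiting the optimality of $\hat{s}_r$ against $\hat{w}$ and paying a triangle-inequality penalty for the prediction error.

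First I would plug MRPC's fast action $\hat{f}_t = -(B^f)^{-1}(B^s\hat{s}_r + w_t)$ into the dynamics
\begin{equation*}
x_t = A x_{t-1} + B^f \hat{f}_t + B^s \hat{s}_r + w_t,
\end{equation*}
and observe that $B^f \hat{f}_t$ cancels $B^s\hat{s}_r + w_t$ exactly, leaving $x_t = A x_{t-1}$. Since $x_0 = 0$, an induction gives $x_t = 0$ for all $t$, so the state-cost term vanishes. Hence
\begin{equation*}
MRPC = \sum_{r \in \mathcal{S}} \Bigl[ k \|\hat{s}_r\|_s + \sum_{t=r}^{r+k-1} \|(B^f)^{-1}(B^s \hat{s}_r + w_t)\|_f \Bigr].
\end{equation*}

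Next, I would define, for each slow slot $r$ and any noise sequence $u$,
\begin{equation*}
\phi_r(s; u) \;=\; k\|s\|_s + \sum_{t=r}^{r+k-1} \|(B^f)^{-1}(B^s s + u_t)\|_f,
\end{equation*}
so that the MRPC expression above is $\sum_r \phi_r(\hat{s}_r; w)$, the slow-controller rule \eqref{e.slowaction} says $\hat{s}_r = \argmin_s \phi_r(s; \hat w)$, and the target RHS (sans error term) equals $\min_s \sum_r \phi_r(s_r; w) = \sum_r \min_{s_r} \phi_r(s_r; w)$ (separability across $r$). Letting $s^*_r \in \argmin_{s_r} \phi_r(s_r; w)$, the key per-slot inequality chain is
\begin{align*}
\phi_r(\hat{s}_r; w) &\le \phi_r(\hat{s}_r; \hat w) + \bigl|\phi_r(\hat{s}_r; w) - \phi_r(\hat{s}_r; \hat w)\bigr| \\
&\le \phi_r(s^*_r; \hat w) + \|(B^f)^{-1}\| \sum_{t=r}^{r+k-1} \|w_t - \hat{w}_t\|_f \\
&\le \phi_r(s^*_r; w) + 2\|(B^f)^{-1}\| \sum_{t=r}^{r+k-1} \|w_t - \hat{w}_t\|_f,
\end{align*}
where the first step uses optimality of $\hat{s}_r$ against $\hat w$, and the two prediction-error bounds come from two applications of the reverse triangle inequality on $\|\cdot\|_f$ together with $\|(B^f)^{-1}v\|_f \le \|(B^f)^{-1}\|\,\|v\|_f$.

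Summing over $r \in \mathcal{S}$ telescopes the prediction-error sum to all $t$ and completes the bound. The only mild subtlety is bookkeeping: making sure the separability of $\phi_r$'s across $r$ justifies moving the min inside the sum on the RHS, and that the two triangle-inequality applications really only cost one factor of $\|(B^f)^{-1}\|\sum_t\|\hat w_t - w_t\|_f$ each; neither step is hard, so I do not expect a true obstacle.
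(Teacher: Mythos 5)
Your proposal is correct and follows essentially the same route as the paper's proof: establish that the reflexive fast action zeroes the state so MRPC's cost reduces to the slow-slot functional evaluated at $(\hat{s}_r, w)$, then apply the triangle inequality, the optimality of $\hat{s}_r$ against $\hat{w}$, and the triangle inequality again, each swap of $w$ for $\hat{w}$ costing $\|(B^f)^{-1}\|\sum_t\|\hat{w}_t - w_t\|_f$. Your version is slightly more explicit than the paper's (the induction showing $x_t = 0$ and the separability of the minimization across slots are stated rather than left implicit), but the argument is the same.
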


\begin{proof} Plugging our control actions into the cost function and applying the Triangle Inequality, we have 
\begin{align}
MRPC =& \sum_{r \in \mathcal{S}} \left[ k \|\hat{s}_r\|_s + \sum_{t =r }^{r + k -1} \|(B^f)^{-1}(B^s\hat{s}_r + w_t)\|_f \right] \nonumber \\
\leq& \sum_{r \in \mathcal{S}} \left[ k \|\hat{s}_r\|_s + \sum_{t =r }^{r + k -1} \|(B^f)^{-1}(B^s\hat{s}_r + \hat{w}_t)\|_f \right] \nonumber \\ 
&+ \sum_{t = 1}^T \|(B^f)^{-1}(\hat{w}_t - w_t) \|_f \nonumber 
\end{align}
Now we use the definition of $\hat{s}_r$ to obtain the upper bound
\begin{align}
 \min_{ s} \sum_{r \in \mathcal{S}} \left[ k \|{s_r}\|_s +  \sum_{t =r }^{r + k -1} \|(B^f)^{-1}(B^s s_r + \hat{w}_t)\|_f \right] \nonumber \\
+ \|(B^f)^{-1}\|\sum_{t = 1}^T \|\hat{w_t} - w_{t} \|_f \nonumber 
\end{align}
Notice that the minimization in $s$ depends on the estimates $\hat{w}_t$, not the true values $w_t$. Applying the Triangle Inequality once again allows us to produce an upper bound where the minimization is over the true values:
\begin{align} \min_{ s} \sum_{r \in \mathcal{S}} \left[ k \|{s_r}\|_s +  \sum_{t =r }^{r + k -1} \|(B^f)^{-1}(B^s s_r + w_t)\|_f \right] \nonumber \\
+ 2\|(B^f)^{-1}\|\sum_{t = 1}^T \|\hat{w_t} - w_{t} \|_f \nonumber 
\end{align}
This proves the claim.
\end{proof}

The combination of Lemmas \eqref{l.loweropt} and \eqref{l.upperalg} immediately yield Theorem \ref{t.multitimescale_cr}.

\section{Concluding Remarks}
In this paper we present a simple and general model of multi-timescale control problems. We prove a hardness result using a blackbox reduction to online convex optimization, and show that predictions are necessary to construct a constant competitive algorithm. Further, we propose a simple control policy with a clean separation between timescales that uses only a small number of noisy predictions. 

Our decomposition results in a sophisticated, predictive slow controller and a simple, reactive fast controller. This framework mirrors the architecture of many real-world control systems, where a slow, centralized controller guides the system towards global optimality while fast, decentralized controllers help to quickly counteract any perturbations that may arise. Remarkably, despite the simplicity of our fast controller and the fact that our policy has access to only limited information about the future, we derive strong guarantees on the performance of our policy. In particular, we prove that the per-step cost incurred by our algorithm is at most a constant more than that incurred by the offline optimal, and in some cases our policy even matches the offline optimal costs. 

There are several natural and important problems left open by our work. Firstly, we do not consider delay in our model, though many real-world systems feature information-sharing constraints arising from delay. It would be natural to add such constraints to the slow controller. Secondly, it would be interesting to consider a hybrid model that is distributed across both time and space, i.e one that features both decentralization and multiple timescales. Most systems that operate across multiple timescales, such as the smart grid, also feature both centralized and localized controllers. 

\bibliographystyle{abbrv}
\bibliography{references}

\end{document}